      \theoremstyle{plain}
      \newtheorem{theorem}{Theorem}[section]
      \newtheorem{lemma}[theorem]{Lemma}
      \newtheorem{corollary}[theorem]{Corollary}
      \newtheorem{proposition}[theorem]{Proposition}
      \theoremstyle{definition}
  \newtheorem{example}[theorem]{Example}
      \theoremstyle{remark}
      \newtheorem{remark}[theorem]{Remark}
\begin{document}

\title{Extension of  Plurisubharmonic Functions in the Lelong Class }
\author{Ozcan Yazici}
\thanks{}
\subjclass[2010]{Primary 32U05; Secondary: 32C25, 32Q15, 32Q28}
\date{}
\address{}

\pagestyle{myheadings}
\begin{abstract} Let $X$ be an algebraic subvariety of $\mathbb C^n$ and $\overline X$ be its closure in $\mathbb P^n.$ In their paper \cite{CGZ}  Coman-Guedj-Zeriahi proved that  any plurisubharmonic function with logarithmic growth on $X$ extends to a plurisubharmonic function with logarithmic growth on $\mathbb C^n$ when  the germs $(\overline X,a)$ in $\mathbb P^n$ are irreducible for all $a\in \overline X\setminus X.$  In this paper we consider $X$  for which the germ $(\overline X,a)$  is reducible for some $a\in \overline X\setminus X$ and we give a necessary and sufficient condition for $X$ so that any plurisubharmonic function with logarithmic growth on $X$ extends to a plurisubharmonic function with logarithmic growth on $\mathbb C^n.$

\end{abstract}

\maketitle

\section{Introduction}

Let $X$ be an analytic subvariety of $\mathbb C^n$. A function $\phi : X \rightarrow [-\infty, +\infty)$ is called plurisubharmonic (psh) if $\phi\not\equiv -\infty $ on any open subset of $X$ and every point $z\in X$ has a neighborhood $U$ in $\mathbb C^n$ so that $\phi=u|_X$ for some psh function $u$ on $U$. We refer to \cite{FN} and \cite{D2} for more details and equivalent definitions.

We denote by $\mathbb P^n$ the complex projective space and consider the standard embedding $$z\in \mathbb C^n \hookrightarrow [1:z]\in \mathbb P^n,$$ where $[t:z]$ denote the homogeneous coordinates on $\mathbb P^n$. Let  $\omega$ be the Fubini-Study K\"{a}hler form on $\mathbb P^n$ with the potential function $\rho(t,z)=\log \sqrt{|t|^2+||z||^2}$. We call $\phi$ a $quasiplurisubharmonic$ ($qpsh$) function in $\mathbb P^n$ when $\phi$ is locally the sum of a psh function and a smooth function. Then the class of $\omega$- plurisubharmonic $(\omega-psh)$ functions on $\mathbb P^n$ is defined by $$PSH(\mathbb P^n,\omega)=\{\phi\in L^1(\mathbb P^n,[-\infty, +\infty)):\; \phi\; \text{ qpsh},\; dd^c\phi + \omega\geq 0   \},$$
where the operators $d$ and $d^c$ are defined by $d:=\partial +\overline \partial$, $d^c:=\frac{i}{2\pi}(\overline \partial -\partial)$ so that $dd^c=\frac{i}{\pi}\partial \overline \partial.$
We refer the reader to \cite{GZ} for the basic properties of $\omega$-psh functions.

Let  $X$ be an analytic subvariety of $\mathbb P^n$. An upper semicontinuous function $\phi: X\rightarrow [-\infty, +\infty)$ is called $\omega|_X-$ psh if $\phi\not\equiv -\infty$  on any open subset of $X$ and if there exist an open cover $\{U_i\}_{i\in I}$ of $X$ and psh functions $\phi_i$ and $\rho_i$ defined on $U_i$ where $\rho_i$ is smooth and $dd^c \rho_i=\omega$, so that $\rho_i+\phi=\phi_i$ holds on $X\cap U_i$ for all $i\in I.$ The class of $\omega|_X-$ psh functions on $X$ is denoted by $PSH(X,\omega|_X ).$

If not stated otherwise, we will assume in the sequel that $X$ is an algebraic subvariety of $\mathbb C^n$ for $n\geq 2$. By  $\overline{X}$  we denote the closure of $X$ in $\mathbb P^n$ so $\overline X$ is an algebraic subvariety of $\mathbb P^n$.
 By $\mathcal L (X)$ we denote the  $Lelong$ $class$ of psh functions $\phi$ on  $X$  which verify $\phi(z)\leq \log^+||z||+C \; \text{for all}\; z\in X,\; \text{where} \; C\; \text{is a constant that depends on}\; \phi .$

The mapping
\begin{eqnarray*} F: PSH(\mathbb P^n,\omega)\rightarrow \mathcal L(\mathbb C^n),\;
F(\phi)(z)=\rho(1,z)+\phi([1:z]),
\end{eqnarray*}
is well defined.
Its inverse $F^{-1}:\mathcal L(\mathbb C^n) \rightarrow  PSH(\mathbb P^n,\omega)$ is given by $F^{-1}(\eta)=\tilde \eta$, where
\begin{equation*}
\tilde \eta([t:z]) = \left\{
\begin{array}{ll}
\eta(z)-\rho(1,z) & \text{if} \;  t=1, \\
\limsup_{\mathbb C^n\ni[1:\zeta]\rightarrow [0:z]}(\eta(\zeta)-\rho(1,\zeta)) & \text{if } t=0.
\end{array} \right.
\end{equation*}
Thus there is a one-to-one correspondence between the Lelong class $\mathcal L(\mathbb C^n)$ and $PSH(\mathbb P^n,\omega)$  (see \cite{GZ} for details).
It is natural to ask that if there is such a correspondence between $\mathcal L(X)$ and $PSH(\overline X,\omega|_{\overline X})$. However  $F:PSH(\overline X,\omega|_{\overline X})\rightarrow \mathcal L(X)$ which is defined like above is not necessarily surjective.  In fact any $\eta\in \mathcal L(X)$ induces an upper semicontinuous function
\begin{equation*}
 \tilde \eta([t:z]): = \left\{
\begin{array}{ll}
\eta(z)-\rho(1,z) & \text{if} \; t=1, z\in X, \\
\limsup_{X\ni[1:\zeta]\rightarrow [0:z]}(\eta(\zeta)-\rho(1,\zeta)) & \text{if } t=0,
\end{array} \right.
\end{equation*}
 on $\overline X.$ However $\tilde\eta$ is not necessarily $\omega|_{\overline X}-psh$ on $\overline X.$ It is in general only $weakly \; \omega-psh$, i.e. it is bounded above on $\overline X$ and $\omega|_{\overline {X_r}}-psh$ on $\overline {X_r}$, where $\overline {X_r}$ is the regular part of $\overline X$ (see \cite{D2} for details).

 We denote by $\mathcal L_{\gamma}(X)$, where $\gamma$ is a positive constant, the Lelong class of psh functions on $X$ which verify $\phi(z)\leq \gamma\log^+||z||+C$ for all $z\in X$, where $C$ is a constant that depends on $\phi$. For an analytic subvariety $X\subset\mathbb C^n$  \cite[Theorem A]{CGZ} implies that any function $\phi\in \mathcal L(X)$  has an extension in $\mathcal L_{\gamma}(\mathbb C^n)$ for every $\gamma>1.$    In \cite[section 3]{CGZ} the question whether this additional arbitrarily small growth is  necessary  on an algebraic subvariety to have an extension is addressed. More precisely,  is every psh function with logarithmic growth on  an algebraic variety $X\subset \mathbb C^n$ the restriction of a function in $\mathcal L(\mathbb C^n)?$ The following is proved:

\begin{proposition}\label{1}\cite{CGZ} Let $\eta \in \mathcal L(X).$ The following are equivalent: \\
(i) There exists $\psi\in \mathcal L(\mathbb C^n)$ so that $\psi = \eta$ on $X.$ \\
(ii) $\tilde \eta \in PSH(\overline X, \omega|_{\overline X}).$\\
(iii) For every point $a\in \overline X \setminus X$ the following holds: if $(X_j,a)$ are irreducible components of the germ $(\overline X, a)$ then the value $$\limsup_{X_j\ni[1:\zeta]\rightarrow a}(\eta(\zeta)-\rho(1,\zeta)) $$  is independent of $j$. \\
In particular, if the germs $(\overline X,a)$ are irreducible for all points $a\in \overline X\setminus X$ then $\mathcal L (X)=\mathcal L (\mathbb C^n)|_X.$
\end{proposition}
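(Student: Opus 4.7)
The plan is to establish (i)$\Leftrightarrow$(ii) via the $\omega$-psh extension theorem of Coman--Guedj--Zeriahi, and (ii)$\Leftrightarrow$(iii) by local analysis of $\tilde\eta$ at the points of $\overline X\setminus X$. The ``in particular'' clause then follows at once, since when every germ $(\overline X,a)$ is irreducible, condition (iii) is vacuous for every $\eta\in\mathcal L(X)$.

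For (i)$\Rightarrow$(ii), I would start from an extension $\psi\in\mathcal L(\mathbb C^n)$ of $\eta$. The bijection $F$ furnishes $\tilde\psi\in PSH(\mathbb P^n,\omega)$, whose restriction $\tilde\psi|_{\overline X}$ automatically lies in $PSH(\overline X,\omega|_{\overline X})$. To identify this restriction with $\tilde\eta$, I would use that $X$ is dense in $\overline X$ near any $a\in\overline X\setminus X$: by upper semicontinuity of $\tilde\psi|_{\overline X}$ together with the subaverage property on small neighborhoods in $\overline X$, the value $\tilde\psi|_{\overline X}(a)$ coincides with the unregularized $\limsup$ taken from the dense subset $X$, which equals $\tilde\eta(a)$ since $\psi=\eta$ on $X$. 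For the converse (ii)$\Rightarrow$(i), I invoke \cite[Theorem A]{CGZ} to lift $\tilde\eta$ to a global $\omega$-psh function on $\mathbb P^n$, then transport back by $F$ to produce $\psi\in\mathcal L(\mathbb C^n)$ with $\psi|_X=\eta$.

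For (ii)$\Rightarrow$(iii), the restriction of $\tilde\eta$ to an irreducible germ $(X_j,a)$ is $\omega$-psh, hence equal to its own upper semicontinuous regularization; the unregularized limit $\limsup_{X_j\ni[1:\zeta]\to a}(\eta(\zeta)-\rho(1,\zeta))$ therefore coincides with the ambient value $\tilde\eta(a)$, visibly independent of $j$. The reverse implication (iii)$\Rightarrow$(ii) is the geometric core: away from $\overline X\setminus X$, $\tilde\eta$ is $\omega|_X$-psh since $\eta$ is psh on $X$, and the weak $\omega$-psh property already delivers the $\omega$-psh property on the regular locus of $\overline X$. Near a singular boundary point $a$ with decomposition $(\overline X,a)=\bigcup_j(X_j,a)$, condition (iii) supplies a common value at $a$ approached along every branch, which gives upper semicontinuity of $\tilde\eta$ at $a$. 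To upgrade this intrinsic property to the ambient local extension demanded by the definition of $PSH(\overline X,\omega|_{\overline X})$, I would pass to the normalization of $\overline X$ near $a$ so as to separate the branches, observe that the pulled-back function is bounded and $\omega$-psh on the smooth normalization, and then apply a Sadullaev/El Mir-type local psh extension theorem to realize $\tilde\eta$ in a neighborhood of $a$ in $\mathbb P^n$ as the restriction of a genuine ambient psh potential.

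The main obstacle I anticipate is exactly this last ambient extension step in (iii)$\Rightarrow$(ii): matching limsups across the branches yields upper semicontinuity on $\overline X$, but turning this into the restriction of a psh function on an open set in $\mathbb P^n$ requires care in spite of the possibly non-normal and highly singular nature of $\overline X$ at $a$. This is precisely the mechanism by which reducibility of the germ $(\overline X,a)$ would obstruct extension whenever the branch-wise limsups disagree.
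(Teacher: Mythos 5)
The paper does not prove this proposition at all: it is quoted verbatim from \cite{CGZ} (it is their Proposition 3.1), and the present paper only uses it as a black box. So there is no in-paper proof to compare against, and I will instead assess your outline on its own terms.

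Your overall architecture is reasonable, but there is a genuine error in your (i)$\Rightarrow$(ii) step. You identify $\tilde\psi|_{\overline X}$ with $\tilde\eta$ by arguing that at a point $a\in\overline X\setminus X$ the value $\tilde\psi(a)$ ``coincides with the unregularized $\limsup$ taken from the dense subset $X$.'' That is false in general: $\tilde\psi(a)$ is by definition $\limsup_{\mathbb C^n\ni[1:\zeta]\to a}(\psi(\zeta)-\rho(1,\zeta))$ over \emph{all} of $\mathbb C^n$, and a psh extension $\psi$ of $\eta$ is only controlled on the thin set $X$; off $X$ it can be much larger, so $\tilde\psi|_{\overline X}(a)>\tilde\eta(a)$ is perfectly possible (take $X=\{z_2=0\}\subset\mathbb C^2$, $\eta=\rho(1,\cdot)|_X-1$ and $\psi=\max(\log\sqrt{1+|z_1|^2}-1,\log|z_2|)$: then $\tilde\psi([0{:}1{:}0])=0$ while $\tilde\eta([0{:}1{:}0])=-1$). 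The sub-mean-value argument you invoke only equates $\tilde\psi(a)$ with the limsup over a full neighborhood, not over a proper subvariety. The clean route from (i) is instead to (iii): restrict $\tilde\psi$ to each irreducible branch $(X_j,a)$ and apply the Lemma \ref{lsp2} mechanism (limsup of a qpsh function on an irreducible germ is attained off any proper subvariety, here $X_j\cap\{t=0\}$) to conclude that $\limsup_{X_j\ni[1:\zeta]\to a}(\eta(\zeta)-\rho(1,\zeta))=\tilde\psi(a)$ for every $j$, which is independent of $j$. Two smaller points: for (ii)$\Rightarrow$(i) the relevant extension result in \cite{CGZ} is their Theorem B (exact extension of $\omega$-psh functions from subvarieties of $\mathbb P^n$), not Theorem A, which only gives extensions in $\mathcal L_\gamma$ with $\gamma>1$; and your (iii)$\Rightarrow$(ii) step, which you correctly flag as the crux, is left essentially unproved --- the standard mechanism is Demailly's theorem \cite[Theorem 1.7]{D2} that weakly psh functions on locally irreducible spaces are psh, combined with the matching of branch-wise limsups at the reducible points, rather than a normalization plus Sadullaev/El Mir argument.
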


Here  we consider the converse of the last statement : If $X$ is such that the germ $(\overline X,a)$ is reducible for some $a\in \overline X \setminus X$ then is there always a function in $\mathcal L (X)$ which does not extend to a function in $\mathcal L(\mathbb C^n)?$ That is, is the inclusion $\mathcal L (\mathbb C^n)|_X\subseteq \mathcal L(X)$ strict? 

We need to give a definition before we state our main result answering the above question.  Let $(X_k,a)$ be  irreducible components of the germ $(\overline X,a)$.  We will say that  $X_i$ and $X_j$  are \textit{linked} if there exist some irreducible components ${X_{i_k}}'s$ such that all the intersections $X_i\cap X_{i_1}\cap \mathbb C^n$, $X_{i_1}\cap X_{i_2}\cap \mathbb C^n$,..., $X_{i_m}\cap X_j\cap \mathbb C^n$ have positive dimension.
Now we can state our main result:

\begin{theorem} \label{main} Let $X$ be an algebraic variety in $\mathbb C^n$ where $n\geq 2$. Then any function in $\mathcal L(X)$ extends to a function in $\mathcal L(\mathbb C^n)$ if and only if for all $a\in \overline X\setminus X$, any two irreducible components of the germ $(\overline X,a)$ are linked.
\end{theorem}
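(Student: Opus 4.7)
By Proposition \ref{1}, for any $\eta\in\mathcal L(X)$ the existence of an extension to $\mathcal L(\mathbb C^n)$ is equivalent to the equality of the limsup values
\[
c_i(a):=\limsup_{X_i\ni[1:\zeta]\to a}(\eta(\zeta)-\rho(1,\zeta))
\]
across all irreducible germ components $X_i$ of $(\overline X,a)$, at every $a\in\overline X\setminus X$. Thus Theorem \ref{main} becomes the equivalence: the linked condition holds if and only if $c_i(a)=c_j(a)$ for all such $i,j,a$ and all $\eta\in\mathcal L(X)$.

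\emph{Sufficiency.} Assume every pair of germ components at every $a\in\overline X\setminus X$ is linked. By transitivity along the linking chain it suffices to prove $c_p=c_q$ when $X_p\cap X_q\cap\mathbb C^n$ has positive dimension at $a$. Fix such a pair and pick an irreducible positive-dimensional analytic germ $Y\subset X_p\cap X_q$ at $a$; set $c_Y:=\limsup_{Y\cap\mathbb C^n\ni z\to a}(\eta(z)-\rho(1,z))$, so trivially $c_Y\le\min(c_p,c_q)$. Because $(X_p,a)$ is irreducible, a local Fornaess--Narasimhan extension produces, on a $\mathbb P^n$-neighborhood $V$ of $a$, a psh function $u$ whose restriction to $X_p\cap V\cap\mathbb C^n$ equals the affine potential of $\tilde\eta$ and with $u(a)-\rho'(a)=c_p$ for a smooth local Fubini--Study potential $\rho'$; the function $\tilde\eta_p:=u-\rho'$ is then $\omega|_{X_p\cap V}$-psh with $\tilde\eta_p(a)=c_p$. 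Restriction of $\tilde\eta_p$ to $Y\subset X_p$ is $\omega|_{Y\cap V}$-psh. Since the value of any $\omega$-psh (USC) function at a point equals the limsup from a punctured neighborhood, we obtain $c_p=\tilde\eta_p|_Y(a)=\limsup_{Y\ni z\to a}\tilde\eta_p(z)=c_Y$. Symmetrically $c_q=c_Y$, so $c_p=c_q$; iteration along the chain yields $c_i=c_j$ in general.

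\emph{Necessity.} Suppose at some $a\in\overline X\setminus X$ there are two unlinked germ components, and partition the germ components at $a$ into two non-empty mutually unlinked classes $\mathcal A$ and $\mathcal B$. Unlinkedness forces $X_k\cap X_\ell\cap\mathbb C^n$ to have dimension $0$ at $a$ for every $X_k\in\mathcal A,X_\ell\in\mathcal B$, so a sufficiently small $\mathbb P^n$-neighborhood $V$ of $a$ sees $\bigcup_{\mathcal A}X_k$ and $\bigcup_{\mathcal B}X_\ell$ meeting only at $a$; choose holomorphic functions $f,g$ on $V$ vanishing on $\bigcup_{\mathcal A}X_k$ and $\bigcup_{\mathcal B}X_\ell$ respectively. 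The model case $X=\{y=x^2\}\cup\{y=-x^2\}\subset\mathbb C^2$ at $a=[0:0:1]$, where the psh function
\[
\tfrac12\log\bigl(\tfrac{1}{4e^2}|y-x^2|^2+\tfrac14|y+x^2|^2+1\bigr)
\]
restricts to $X$ as an element of $\mathcal L(X)$ with $c$-values $0$ and $-1$ on the two parabolas, generalizes: with $\alpha,\beta>0$ calibrated so that the leading orders of $|f|$ on $\mathcal B$-branches and of $|g|$ on $\mathcal A$-branches produce prescribed distinct limits of $u-\rho$ along the two classes at $a$, the psh function
\[
u(z):=\tfrac12\log\bigl(\alpha|f(z)|^2+\beta|g(z)|^2+1\bigr)
\]
on $V$ has restriction $u|_{X\cap V}$ with two different $c$-values at $a$. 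Globalizing to an element $\eta\in\mathcal L(X)$ by $\eta:=\max(u|_{X\cap V},\rho+c_0)$ inside $V$ and $\eta:=\rho+c_0$ outside, with $c_0$ chosen strictly between the two prescribed limits, produces a psh function on $X$ in the Lelong class with $c_k\ne c_\ell$ at $a$; Proposition \ref{1} then forbids its extension.

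The principal subtlety is on the sufficiency side, in justifying that the local Fornaess--Narasimhan extension can indeed be selected with value exactly $c_p$ at $a$, so that the key identity $c_p=c_Y$ reduces to the standard USC--submean-value property of $\omega$-psh functions. On the necessity side the construction is explicit but requires care in choosing the separating $f,g$ and the calibrating constants $\alpha,\beta$, and in gluing the local piece into a globally defined element of $\mathcal L(X)$ -- particularly when the unlinked germ components at $a$ originate from the same global irreducible component of $X$ (for instance in the irreducible curve $\{y^2=x^4+1\}$), where a normalization argument or an analogous chain-of-maxima construction is needed.
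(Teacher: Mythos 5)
Your sufficiency argument follows the paper's route: reduce to a single link in the chain, extend $\eta-\rho$ as an $\omega$-psh function across infinity on each locally irreducible component (the paper cites Demailly's theorem on weakly psh functions on locally irreducible varieties rather than a Forn\ae ss--Narasimhan extension, but the role is the same), restrict to a positive-dimensional piece $Y$ of the intersection not contained in $\{t=0\}$, and equate the values at $a$. The one point you gloss over is that you need the limsup of $\tilde\eta_p|_Y$ at $a$ to be attained along $Y\cap\mathbb C^n$, i.e.\ off the proper subvariety $Y\cap\{t=0\}$, which may be strictly larger than $\{a\}$; this is not the bare USC/submean-value property but the paper's Lemma \ref{lsp2}, proved by a curve-selection argument through $a$ avoiding a given proper subgerm.

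The necessity direction has a genuine gap. You take $f,g$ holomorphic on a $\mathbb P^n$-neighborhood $V$ of $a$; such functions are bounded near $a$, so $u=\tfrac12\log(\alpha|f|^2+\beta|g|^2+1)$ is bounded near $a$ while $\rho(1,z)\to+\infty$ as $X\ni[1:z]\to a$. Hence $u-\rho\to-\infty$ along both classes of branches, no calibration of $\alpha,\beta$ can produce two distinct finite limits, and after taking $\max(u,\rho+c_0)$ both limsups equal $c_0$: the function you build in fact extends, so no obstruction is produced. What makes your model case work is that $y-x^2$ and $y+x^2$ are global polynomials whose moduli on the opposite parabolas grow exactly like $e^{2\rho}$; in general you would need polynomials (or sections of a suitable $\mathcal O(d)$) vanishing on one class of branches whose growth on the other class is matched to a definite power of $e^{\rho}$, and the existence of such objects is precisely what is not established -- it is the hard part of this direction. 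The paper avoids it entirely: after Lemma \ref{sep} separates the germ into $\tilde X_i\cup\tilde X_j$ with $\tilde X_i\cap\tilde X_j\cap\mathbb C^n=\emptyset$, it uses the single universal function $u=\max(\log|z_1|,\dots,\log|z_{n-1}|,\tfrac12\log|z_n|)$, which dominates $\rho-2\log M$ on the boundary of the cone $V_M$ about $a$ but falls to roughly $\tfrac12\rho$ deep inside it; gluing $\max(u+2\log M,\rho)$ on one separated piece against $u+2\log M$ on the rest of $X$ yields limsup values $0$ and $-\infty$ at $a$, with no separating holomorphic functions needed. Your closing worry about unlinked germ components coming from the same global irreducible branch of $X$ is resolved by that same max-gluing and does not require normalization.
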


The proof of this theorem will be given in Section 2. In section 3 we will consider some well known examples of algebraic varieties $X$ and  check whether our condition in Theorem \ref{main} holds for $X$, hence whether $\mathcal L(X)=\mathcal L(\mathbb C^n)|_X.$ In cases where $\mathcal L(X)\neq \mathcal L(\mathbb C^n)|_X$ we will construct a function $\eta\in\mathcal L (X)$ which has no extension in $\mathcal L(\mathbb C^n)$. By \cite[Theorem A]{CGZ} we know that there is an extension of $\eta$ in $\mathcal L_{\gamma}(\mathbb C^n)$ for all $\gamma>1.$  Here we give such an extension of $\eta$ explicitly.  If the germs $(\overline X,a)$ are irreducible for all points $a\in\overline X \setminus X$, then $\mathcal L (X)=\mathcal L(\mathbb C^n)|_X$ by Proposition \ref{1}. In particular, this happens when $\overline X$ is a smooth variety. In Example \ref{ex2} there are three singular points in $\overline X\setminus X.$ At two of these points the germs $(\overline X,a)$ are irreducible. At the other point  the germ $(\overline X,a)$ has two irreducible components which are linked. Thus by Theorem \ref{main},  $\mathcal L (X)=\mathcal L(\mathbb C^n)|_X$.  In Example \ref{ex3}, Example \ref{ex4} and Example \ref{ex5} for some singular point $a\in\overline X\setminus X$, the germ $(\overline X,a)$ has two irreducible components whose intersection lies in the hyperplane at infinity. Therefore these irreducible components are not linked and  by Theorem $\ref{main}$ $\mathcal L (X)\neq\mathcal L(\mathbb C^n)|_X$. In these cases we construct a function  $\eta\in\mathcal L(X)$ which has no extension in   $\mathcal L(\mathbb C^n)$. Then we give an explicit extension of $\eta$ in $\mathcal L_{\gamma}(\mathbb C^n)$ for any $\gamma>1.$\\

\noindent \textbf{Acknowledgments.} The author is grateful to Professor Dan Coman for his guidance and support. He would like to thank the referee for the careful reading of the paper and for comments improving the exposition.

\section {Proof of the Theorem \ref{main}}
 We need some lemmas to prove Theorem \ref{main}.

\begin{lemma} \label{sep}Let $X$ be as in Theorem \ref{main} and let $a\in \overline X\setminus X.$ If two irreducible components $X_i$ and $X_j$ of the germ $(\overline X,a)$ are not linked then  $ (\overline X,a)=\tilde X_i\cup \tilde X_j$  where  $\tilde X_i$ and $\tilde X_j$ are germs of subvarieties of $\overline X$ at $a$ such that $\tilde X_i\cap \tilde X_j\cap \mathbb C^n=\emptyset.$
\end{lemma}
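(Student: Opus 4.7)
The plan is to partition the irreducible components of the germ $(\overline X, a)$ into two groups according to the equivalence relation generated by ``linked'', and then take $\tilde X_i$ and $\tilde X_j$ to be the unions of the two groups.

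A preliminary observation is that no irreducible component $(X_k, a)$ of $(\overline X, a)$ can be contained in the hyperplane at infinity $H_\infty := \{t = 0\}$: otherwise, in a small neighborhood $U$ of $a$ one would have $X \cap U = \bigcup_{\ell \neq k}(X_\ell \cap \mathbb{C}^n \cap U)$, and, using the identity $\overline X \cap U = \overline{X \cap U}$ (both sides consist of limits in $U$ of sequences in $X$), this would force $X_k \subseteq \bigcup_{\ell \neq k} X_\ell$, contradicting the maximality of $X_k$ as an irreducible component. In particular, $X_k \cap \mathbb{C}^n$ has positive dimension at $a$, so ``linked'' is reflexive, and (by concatenating chains) also symmetric and transitive. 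Hence it is an equivalence relation on the finite set of irreducible components of $(\overline X, a)$.

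Let $S$ be the equivalence class of $X_i$ and $T$ the union of the remaining classes. Since $X_i$ and $X_j$ are not linked, $X_j \in T$. Set
\[
\tilde X_i := \bigcup_{X_k \in S} X_k, \qquad \tilde X_j := \bigcup_{X_k \in T} X_k,
\]
viewed as germs of subvarieties of $\overline X$ at $a$. By construction $(\overline X, a) = \tilde X_i \cup \tilde X_j$, so the proof reduces to showing $\tilde X_i \cap \tilde X_j \cap \mathbb{C}^n = \emptyset$ near $a$. Writing $\tilde X_i \cap \tilde X_j = \bigcup_{X_p \in S,\, X_q \in T}(X_p \cap X_q)$, it is enough to verify that $X_p \cap X_q \subseteq H_\infty$ near $a$ for every such pair.

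I expect this last step to be the main difficulty. Being in different equivalence classes, $X_p$ and $X_q$ are in particular not directly linked, so $X_p \cap X_q \cap \mathbb{C}^n$ does not have positive dimension at $a$; however, the lemma demands the stronger conclusion that this set is empty as a germ. I would argue component by component on $(X_p \cap X_q, a)$: if some irreducible subcomponent $Y$ of this germ were not contained in $H_\infty$, then necessarily $\dim Y \geq 1$ (a zero-dimensional germ at $a$ is just $\{a\} \subseteq H_\infty$), and the local defining function $t$ of $H_\infty$ would not vanish identically on the irreducible analytic germ $Y$; hence $Y \cap H_\infty$ would have pure codimension one in $Y$, so $Y \cap \mathbb{C}^n = Y \setminus H_\infty$ would contribute a positive-dimensional piece to $X_p \cap X_q \cap \mathbb{C}^n$, forcing $X_p$ and $X_q$ to be directly linked --- a contradiction. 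Therefore every irreducible component of $(X_p \cap X_q, a)$ lies in $H_\infty$, yielding $X_p \cap X_q \cap \mathbb{C}^n = \emptyset$ near $a$ and completing the proof.
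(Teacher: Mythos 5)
Your proposal is correct and follows essentially the same route as the paper: you group the components of $(\overline X,a)$ by the linkage relation with $X_i$ on one side and the rest on the other, observe that a positive-dimensional piece of $X_p\cap X_q\cap\mathbb C^n$ would create a direct link across the two groups (contradicting the partition), and conclude that every irreducible component of $\tilde X_i\cap\tilde X_j$ lies in $\{t=0\}$. Your added remarks (that no local component lies in the hyperplane at infinity, and the explicit treatment of the zero-dimensional case) merely spell out details the paper leaves implicit.
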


\begin{proof} Let $(X_k,a)$, $k\in I$, be irreducible components of the germ $(\overline X,a)$. We take $\tilde X_i=X_i\cup \{\cup_{k\in K}X_k\} $ and $\tilde X_j=X_j\cup \{\cup_{k\notin K}X_k\}$ where $K=\{k\in I: X_k\;  \text{linked to}\;  X_i \}$. We claim that $\tilde X_i\cap\tilde X_j\cap \mathbb C^n $ has dimension $0$. Otherwise one of the irreducible component of $\tilde X_i$ is linked to an irreducible component of $\tilde X_j$. Consequently this irreducible component of $\tilde X_j$ is linked to $X_i$ and this contradicts the definition of the set $K$.\\
\indent Let $Y$ be an irreducible component of the germ $\tilde X_i\cap \tilde X_j$ at $a$. Then the previous claim implies that $Y\subset\{t=0\}$, hence $\tilde {X_i}\cap \tilde{X_j}\subset \{t=0\}.$ Thus  $\tilde X_i\cap \tilde X_j\cap \mathbb C^n=\emptyset.$\end{proof}

The following lemma will show that for a qpsh function $v$ on a germ of an irreducible analytic variety $(X,p)$, $\limsup_{z\rightarrow p} v(z)$  is attained along the complement $X\setminus Y$ for any proper germ of subvariety $(Y,p)$ of $(X,p).$   Although it is well known, we will include its proof for the convenience of the reader.

\begin{lemma}\label{lsp2}Let $(Y,p)\subset (X,p)$ be germs of analytic varieties in $\mathbb C^n$ such that  $\dim (X,p)=k>0$, $(X,p)$ is irreducible and $(Y,p)\neq (X,p).$ Then $$\limsup_{X\setminus Y\ni z\rightarrow p}v(z)=v(p),$$ for any $qpsh$ function $v$ on $(X,p).$
\end{lemma}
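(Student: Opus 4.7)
The inequality $\limsup_{X\setminus Y\ni z\to p}v(z)\leq v(p)$ is immediate from the upper semicontinuity of $v$, so the content of the lemma lies in the reverse inequality. Since the qpsh condition is local and $v$ differs locally from a psh function by a continuous summand that does not affect limsups, I would first reduce to the case where $v$ is psh on $(X,p)$; by the definition recalled in the introduction this means that on a neighborhood of $p$ in $X$ we have $v=u|_X$ for some psh function $u$ defined on an ambient open set in $\mathbb C^n$.

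The plan is then to produce a non-constant holomorphic disc $f\colon(\Delta,0)\to(X,p)$ whose image is not contained in $Y$, and invoke the classical one-variable statement that a subharmonic function on $\Delta$ is equal to its upper limit over the complement of any discrete set. To build $f$ I would use the local parametrization theorem for the irreducible germ $(X,p)$ of dimension $k$: after a suitable linear projection onto a $k$-dimensional coordinate subspace, the map $\pi\colon(X,p)\to(\mathbb C^k,0)$ becomes a finite proper surjective branched cover with $\pi^{-1}(0)=\{p\}$. Since $\pi$ is finite, $\pi(Y)$ is an analytic germ at $0$ of dimension strictly less than $k$, so for a generic complex line $L\subset \mathbb C^k$ through the origin one has $L\not\subset \pi(Y)$. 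As the fibres $\pi^{-1}(w)$ with $w\in L$ concentrate at $p$ when $w\to 0$, the germ $\pi^{-1}(L)$ at $p$ contains points of $X\setminus Y$ arbitrarily close to $p$, and therefore has some irreducible component $C$ with $C\not\subset Y$; normalizing this one-dimensional irreducible germ yields the desired $f\colon(\Delta,0)\to(C,p)\subset(X,p)$.

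Once $f$ is in hand, $w:=v\circ f=u\circ f$ is subharmonic on $\Delta$ with $w(0)=v(p)$, while $f^{-1}(Y)$ is a discrete subset of $\Delta$ because $C\cap Y$ is $0$-dimensional in the irreducible curve $C$. The sub-mean-value inequality applied on circles $\{|t|=r\}$ for small $r>0$, which all avoid the discrete set $f^{-1}(Y)$, forces the existence of points $t_r$ with $w(t_r)\geq v(p)$; combined with the upper bound coming from the upper semicontinuity of $v$, this yields a sequence $t_n\in\Delta\setminus f^{-1}(Y)$, $t_n\to 0$, with $w(t_n)\to v(p)$. Setting $z_n:=f(t_n)$ produces a sequence in $X\setminus Y$ converging to $p$ along which $v(z_n)\to v(p)$, proving the reverse inequality. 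The only genuinely delicate step is the curve construction in the second paragraph; the rest is elementary one-variable potential theory.
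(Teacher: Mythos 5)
Your proof is correct and follows essentially the same route as the paper's: reduce to the psh case, use the local parametrization theorem to project $(X,p)$ finitely onto $(\mathbb C^k,0)$, pull back a line through the origin not contained in $\pi(Y)$ to get a holomorphic disc $f\colon(\Delta,0)\to(X,p)$ through $p$ avoiding $Y$, and conclude by one-variable subharmonicity of $v\circ f$. The only immaterial difference is that the paper shrinks the disc so that $f(\Delta_\epsilon)\cap Y=\{p\}$ and applies $\limsup_{0\neq t\to 0}(v\circ f)(t)=(v\circ f)(0)$ directly, whereas you only arrange $f^{-1}(Y)$ to be discrete and recover the same conclusion from circle averages.
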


\begin{proof} We construct a non-constant holomorphic function $f:\Delta_{\epsilon}\to X$ such that $f(0)=p$ and $f(\Delta_{\epsilon})\cap Y=\{p\}$ where $\Delta_{\epsilon}$ is a disc of radius $\epsilon$ in $\mathbb C.$ Let $\pi:\mathbb C^k\times\mathbb C^{n-k}\rightarrow \mathbb C^k$ be the projection map onto the first $k$ coordinates. By Local Parametrization Theorem (see \cite[Theorem 4.19 on page 95]{D1}) there is a choice of coordinates in $\mathbb C^n$ such that the restriction of the projection map $\pi: X\cap U\rightarrow U'$ is a finite, proper, holomorphic map where $U$ and $U'$ are some  neighborhood of $p\in\mathbb C^n$ and $0\in \mathbb C^k$ with $\pi(p)=0.$  By Remmert's Proper Mapping Theorem (see \cite[Theorem 8.8 on page 118]{D1}) $\pi(Y\cap U)\subset U'$ is an analytic subvariety. Since $\pi$ is a finite map, $\dim\pi(Y\cap U)=\dim Y<k$ by \cite[Lemma 8.1 on page 118]{D1}.  Let $B_r\subset U'$ be a polydisc in $\mathbb C^k$ centered at $0$ with radius $r>0.$ Let $a\in B_{\frac{r}{2}}\setminus \pi(Y\cap U)$.  We define a holomorphic map $\phi$ from unit disc $\Delta\subset \mathbb C$ to $B_r\subset U'$  by $\phi(\zeta)=2a\zeta.$ Then $\phi(0)=0$ and $\phi(\frac{1}{2})=a\notin\pi(Y\cap U).$ Thus $\phi^{-1}(\pi(Y\cap U))$ is a proper subvariety of $\Delta\subset \mathbb C$. This implies that $0$ is an isolated point in $\phi^{-1}(\pi(Y\cap U))$. We take a smaller disc $\Delta_{\rho}$ such that $\phi(\Delta_{\rho})\cap \pi(Y\cap U)=\{0\}.$ $\pi^{-1}(\phi(\Delta_{\rho}))$ is an analytic subvariety of  $X\cap U$ and its dimension is 1 since $\pi$ is finite.  By parametrization of curves (see \cite[Example 4.27 on page 98]{D1}) there is a non-constant holomorphic map $$f:\Delta_{\epsilon}\subset\mathbb C\rightarrow \pi^{-1}(\phi(\Delta_{\rho}))\subset X,$$ with $f(0)=p$ for some disc $\Delta_{\epsilon}$ of radius $\epsilon.$ It follows that $f(\Delta_{\epsilon})\cap Y=\{p\}.$

Let $v$ be a $qpsh$ function on the germ $(X,p)$. Since $v$ is locally the sum of a psh function and smooth function, it is enough to prove the lemma when $v$ is psh. Since $v\circ f$ is subharmonic in $\Delta_{\epsilon}$ $$ v(p)=v(f(0))=\limsup_{0\neq t\rightarrow 0}v\circ f(t)\leq \limsup _{X\setminus Y\ni z\rightarrow p}v(z)\leq \limsup _{X\ni z\rightarrow p}v(z)=v(p).$$
\end{proof}

 \begin{proof}[Proof of Theorem \ref{main}] First we assume that the germ $(\overline X,a)$ has two irreducible components $X_i$ and $X_j$ which are not linked. Then we will show that   there exists a $\eta\in \mathcal L(X)$ which  has no extension in $\mathcal L (\mathbb C^n)$. For simplicity, we can assume that $a=[0:0:...:1]\in \{t=0\}\subset \mathbb P^n$. We take a neighborhood  $\overline V_M$  of $a$ where  $$V_M=\{(z_1,z_2,...,z_n)\in \mathbb C^n: |z_n|>M \max(1, |z_1|, |z_2|,...,|z_{n-1}| ) \}.$$ For $M$ big enough   $\ X \cap V_M=Y_1 \cup Y_2 $, and $\overline{Y_1}\cap \overline{Y_2}\subset \{t=0\}$ where we can choose  $\overline{Y_1}=\tilde X_i$ and $\overline{Y_2}=\tilde X_j$  as in  Lemma \ref{sep}. Let
\begin{eqnarray*}u(z_1,...,z_n):=\max\{\log|z_1|,\log |z_2|,...,\frac{1}{2}\log|z_n| \}.
\end{eqnarray*}
Note that $u\in \mathcal L(\mathbb C^n).$ We will show that \begin{eqnarray}\label{target}\label{rr}u(z)+2\log M\geq \rho(1,z) \; \text{on} \; \partial V_M, \end{eqnarray}
if $M$ is sufficiently  large.  In order to prove (\ref{target}) we consider four cases for $z\in \partial V_M.$

\indent $Case\;1:$ $|z_n|=M$ for $z=(z_1,...,z_n)\in \partial V_M.$ Then
 \begin{eqnarray*}\rho(1,z)&=&\frac{1}{2}\log(1+|z_1|^2+...+|z_n|^2) \\ &\leq&\frac{1}{2}\log(n+M^2)\leq\frac{1}{2}\log(2 M^2)\leq 2 \log M, \end{eqnarray*}
 when $M$ is big enough. On the other hand $u(z_1,...,z_n)=\frac{1}{2}\log M$ and inequality (\ref{target}) is satisfied.  
 
\indent $Case\; 2:$ $|z_n|=M|z_1|$ and $|z_n|<|z_1|^2.$ Then
$u(z_1,...,z_n)=\log|z_1|$  and
\begin{eqnarray*}\rho(1,z_1,...,z_n)&=&\frac{1}{2}\log(1+|z_1|^2+...+|z_n|^2)\\ &\leq& \frac{1}{2}\log((n+1)|z_n|^2)=\frac{1}{2}\log(n+1)+\log|z_n| \\ &\leq& 2 \log M+\log|z_1|.
\end{eqnarray*}
Thus inequality (\ref{target}) is satisfied in this case.

\indent $Case\;3:$ $|z_n|=M|z_1|$ and $|z_n|\geq |z_1|^2$. Then $$u(z_1,...,z_n)=\frac{1}{2}\log|z_n|=\frac{1}{2}(\log|z_1|+\log M)$$ and
\begin{eqnarray*}\rho(1,z_1,...,z_n)&\leq& \frac{1}{2}\log(n+1)+\log |z_n|=\frac{1}{2}\log(n+1)+\log |z_1|+\log M \\ &\leq& \frac{1}{2}\log|z_1|+\frac{3}{2}\log M +\frac{1}{2}\log(n+1),
\end{eqnarray*}
 since $|z_1|\leq M$. Therefore (\ref{target}) is satisfied in this case too. 
 
 \indent $Case\;4:$ $|z_n|=M|z_i|$ for some $i:2,...,n-1$. Then the same argument as above works. Thus we obtain the inequality (\ref{rr}). 

We consider \begin{equation*}
\eta(z_1,...,z_n) = \left\{
\begin{array}{ll}
\max(u(z_1,...,z_n)+ 2\log M, \rho(1,z_1,...,z_n)) & \text{on} \; Y_1,\\
u(z_1,...,z_n)+2 \log M & \text{on } X\setminus  Y_1.
\end{array} \right.
\end{equation*}

It follows using \cite[Theorem 5.3.1]{FN} that $\eta$ is psh on $X$.  Since $u\in \mathcal L(X)$,  $\eta \in \mathcal L(X).$ Let $$V_{M'}=\{(z_1:z_2:...:z_n)\in\mathbb C^n: |z_n|>M' \max(1, |z_1|, |z_2|,...,|z_{n-1}| ) \},$$  where $M'>e^{2k}M^4$ and $k$ is any positive number. In $V_{M'},$ $\rho(1,z)>\log|z_n|$ and
\begin{eqnarray*} u(z)+2\log M&\leq& \max(\log|z_n|-\log M',\frac{1}{2}\log |z_n|)+ 2\log M\\ &\leq &\log|z_n|-\frac{1}{2}\log M'+2\log M<\log |z_n|-k\leq \rho(1,z)-k.
\end{eqnarray*}
Thus $\eta(z)=\rho(1,z) $ on $Y_1$ near the point $a$ and $\eta(z) -\rho(1,z) <-k$ on $Y_2$ near  the point $a$. These imply that $$\limsup_{Y_1\ni[1:z_1:...:z_n]\rightarrow a}(\eta(z_1,...,z_n)-\rho (1,z_1,...,z_n))=0 $$ and $$ \limsup_{Y_2\ni[1:z_1:...:z_n]\rightarrow a}(\eta(z_1,...,z_n)-\rho (1,z_1,...,z_n))=-\infty.$$ Hence by Proposition \ref{1}, $\eta\in \mathcal L (X)$ does not extend in $\mathcal L (\mathbb C^n)$.  

Now we assume that any two irreducible components of the germ $(\overline X, a)$ are linked for any $a\in \overline X\setminus X$. Let $\eta\in \mathcal L(X)$ and  $X_i$ and $X_j$ be arbitrary irreducible components of $(\overline X,a)$. By the assumption there exist some irreducible components ${X_{i_k}}'s$ such that all the intersections $X_i\cap X_{i_1}\cap \mathbb C^n $, $X_{i_1}\cap X_{i_2}\cap \mathbb C^n $,..., $X_{i_m}\cap X_j\cap \mathbb C^n $ have positive dimension.
Let $C$ be a positive dimensional irreducible analytic subvariety of $X_i\cap X_{i_1}$ which is not contained in $\{t=0\}\subset \mathbb P^3.$ $\eta|_{X_i\cap\{t=1\}}$ induces a function $\tilde \eta$ on $X_i\subset\mathbb P^n$ defined by  
\begin{equation*}
\tilde\eta([t:z]) = \left\{\begin{array}{ll}
\eta(z)-\rho(1,z) & \text{if} \;t=1,\; z\in X_i\cap \mathbb C^n\\
\limsup_{X_i\ni[1:\zeta]\rightarrow [0:z]}(\eta(\zeta)-\rho(1,\zeta)) & \text{if}\; t=0,\; [0,z]\in X_i.
\end{array} \right.
\end{equation*}
Since $X_i$ is locally irreducible near $a$, \cite[Theorem 1.7]{D2} implies that $\tilde\eta$ is $\omega|_{X_i}$-psh on $X_i$. Then $\tilde\eta|_{ C}  $ is $\omega|_{C}$ - psh  on $C$. It follows that
\begin{eqnarray}\limsup_{X_i\ni[1:\zeta]\rightarrow a}(\eta(\zeta)-\rho(1,\zeta))&=&\tilde\eta(a)
=\tilde \eta|_C(a)=\limsup_{\mathbb C^n\cap C\ni[1:z]\rightarrow a}\tilde \eta|_C([1:z])\nonumber\\&=&\limsup _{C \ni [1:z]\rightarrow a}(\eta(z)-\rho(1,z))\nonumber. \end{eqnarray} Note that the third equality above follows from the Lemma \ref{lsp2}. By changing  $X_i$ with $X_{i_1}$  above, we obtain that $$\limsup_{X_{i_1}\ni[1:\zeta]\rightarrow a}(\eta(\zeta)-\rho(1,\zeta))=\limsup_{C\ni[1:\zeta]\rightarrow a}(\eta(\zeta)-\rho(1,\zeta))=\limsup_{X_i\ni[1:\zeta]\rightarrow a}(\eta(\zeta)-\rho(1,\zeta)). $$ By applying the same argument as above to the other irreducible components $X_{i_k}$ of the germ $(\overline X,a)$  we conclude that
$$\limsup_{X_i\ni[1:\zeta]\rightarrow a}(\eta(\zeta)-\rho(1,\zeta))=\limsup_{X_j\ni[1:\zeta]\rightarrow a}(\eta(\zeta)-\rho(1,\zeta)). $$
It follows from Proposition \ref{1} that $\eta\in \mathcal L(X)$ extends in $\mathcal L(\mathbb C^n).$
\end{proof}
\begin{remark} For $X\subset\mathbb C^2$ the intersection of two irreducible components of the germ $(\overline X,a) $ is  given by at most a finite set of points. Thus any two irreducible components of the germ the $(\overline X,a)$ are not linked when $(\overline X,a)$ is reducible. Therefore Theorem \ref{main} has the following immediate corollary in dimension two: \end{remark}
\begin{corollary} Let $X$ be an algebraic variety in $\mathbb C^2$. Then any function in $\mathcal L(X)$ extends to a function in $\mathcal L(\mathbb C^2)$ if and only if  the germs $(\overline X,a)$ are irreducible for all points $a\in \overline X\setminus X$.
\end{corollary}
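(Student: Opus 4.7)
The strategy is to reduce the statement to Theorem~\ref{main}: it suffices to show that when the ambient dimension is $n=2$, the condition ``any two irreducible components of $(\overline X,a)$ are linked'' collapses to ``$(\overline X,a)$ is irreducible''. The preceding remark already asserts this collapse; the corollary is simply its combination with Theorem~\ref{main}, so my plan is to make the dimension count in that remark precise.

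First, I would reduce to the case that $X$ is a curve, i.e.\ purely $1$-dimensional. If $X$ has dimension $2$ then $\overline X=\mathbb P^2$, and $\overline X\setminus X$ lies in the smooth line at infinity where every germ is automatically irreducible, so both sides of the equivalence trivially hold. Thus I may assume every irreducible component of the algebraic set $\overline X\subset\mathbb P^2$ is a projective curve, and consequently every irreducible component $(X_i,a)$ of the germ $(\overline X,a)$ is a $1$-dimensional analytic germ in the $2$-dimensional ambient space.

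The key local observation is then the dimension count: two distinct irreducible $1$-dimensional analytic germs $(X_i,a)$ and $(X_j,a)$ in $(\mathbb P^2,a)$ must meet in a germ of dimension at most $0$, because a common $1$-dimensional subgerm together with irreducibility would force $X_i = X_j$. Hence for any two distinct irreducible components of $(\overline X,a)$, the intersection $X_i\cap X_j\cap \mathbb C^2$ is at most a finite set of points.

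By the definition of ``linked'' from the paper, linkage of $X_i$ and $X_j$ requires a chain of irreducible components whose consecutive pairwise intersections with $\mathbb C^2$ have \emph{positive} dimension; the previous step shows no such chain can exist once two links are distinct. Therefore the condition of Theorem~\ref{main} at a point $a$ is equivalent to the absence of two distinct irreducible components of $(\overline X,a)$, that is, to irreducibility of $(\overline X,a)$. Taking this equivalence over all $a\in\overline X\setminus X$ and invoking Theorem~\ref{main} in both directions yields the corollary. I do not anticipate any serious obstacle; the only step requiring a line of care is the purely local dimension argument for analytic curve germs in a smooth surface, which follows from standard irreducibility of analytic germs.
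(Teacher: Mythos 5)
Your proposal is correct and follows essentially the same route as the paper: the remark preceding the corollary makes exactly your dimension count (two distinct irreducible components of $(\overline X,a)$ in $\mathbb P^2$ meet in at most finitely many points, so no two distinct components can ever be linked), and the corollary is then read off from Theorem \ref{main}. Your extra reduction disposing of the case $\dim X=2$ is a harmless refinement the paper leaves implicit.
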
 

\section {Examples}
In this section we study some  well known examples  of algebraic varieties.
\begin{example} \label{ex2} Let  $X$ be the surface in $\mathbb C^3$ given by equation $xy^2+y^2z^2+z^2x^2-xyz=0$. Then $$\overline X=\{[t:x:y:z]\in \mathbb P^3: xty^2+y^2z^2+z^2x^2-xyzt=0 \}.$$ This surface is called Roman (Steiner) surface. Let $H=\{t=0\}\subset \mathbb P^3$ be the hyperplane at infinity. $H\cap\overline X=\{t=z=0\}\cup \{t=x^2+y^2=0 \}.$ One checks that  $[0:1:0:0]$, $[0:0:1:0]$ and $[0:0:0:1]$ are the only singular points of $\overline X$  in the hyperplane at infinity.

Near $a=[0:1:0:0]$, $\overline X$ is given by the zero set of the Weierstrass polynomial $f\in\mathcal O_{\mathbb C^2,0}[z]$, 
\begin{eqnarray*}\{(t,y,z)\in \mathbb C^3: f(t,y,z)=z^2-\frac{yt}{1+y^2}z+\frac{y^2t}{1+y^2}=0\}.\end{eqnarray*}
We show that the germ $(\overline X,a)$ is irreducible by proving that $f$ is irreducible as a Weierstrass polynomial. Indeed, if $f$ is reducible then $$f(t,y,z)=(z-g(t,y))(z-h(t,y)),$$ where $g\in\mathcal O_{\mathbb C^2,0}$ and $ h\in \mathcal O_{\mathbb C^2,0}$ with $g(0,0)=h(0,0)=0.$  It follows that
\begin{equation}\label{m1} gh=\frac{y^2t}{1+y^2} \;\text{and}\; g+h=\frac{yt}{1+y^2}.
\end{equation}
 Since $t$ divides $gh$, we may assume without loss of generality that $t|g$. By the second equation in (\ref{m1}) $t|h$. This implies that $t^2|gh$ which contradicts to the first equation  in (\ref{m1}). Thus the germ $(\overline X,a)$ is irreducible. In the same way one shows that the germ $(\overline X,a)$ is irreducible at $a=[0:0:1:0].$

Let $a=[0:0:0:1]$. Near $a$,  $\overline X$ is given by 
\begin{eqnarray*}\{(t,x,y)\in\mathbb C^3: f(t,x,y)=y^2-\frac{xt}{1+xt}y+\frac{x^2}{1+xt}=0\},
\end{eqnarray*}
near the origin in $\mathbb C^3.$ The function $f$ can be written as $f=f_1f_2$ where 
\begin{eqnarray*} f_1(t,x,y)=y-\frac{x}{2(1+xt)}(t +\sqrt{t^2-4xt-4} ),
\end{eqnarray*}
\begin{eqnarray*}f_2(t,x,y)=y-\frac{x}{2(1+xt)}(t -\sqrt{t^2-4xt-4} ).\end{eqnarray*} 
Here we take a branch of root function  with $\sqrt{-4}=2i.$ One checks that $\{f_1=0\}\cap \{f_2=0\}=\{x=y=0\}$ near $(0,0,0)$. Thus the germ $(\overline X,a)$ has two irreducible components whose intersection lies along a line not contained in the hyperplane at infinity. Therefore these irreducible components are linked and  by Theorem \ref{main}, $\mathcal L(X)=\mathcal L(\mathbb C^3)|_X.$
\end{example}

The following example is a generalization of \cite[Example 3.2]{CGZ}.

\begin{example} \label{ex3} Let $z=(z_1,...,z_n)\in \mathbb C^n$ and $X=\{z_{m+1}=...=z_n=0\}\cup \{z_{m+1}=...=z_n=1\}\subset \mathbb C^n$ be an $m$ dimensional  subvariety of $\mathbb C^n$.   Let $\rho(t,z)=\log \sqrt{|t|^2+||z||^2}$. The function  \begin{equation*}
\eta(z) = \left\{\begin{array}{ll}
\rho(1,z) & \text{if}\; z\in X_1=\{z_{m+1}=...= z_{n}=0\}, \\
\rho(1,z) + 1 & \text{if}\;  z\in X_2=\{ z_{m+1}=...= z_{n}=1\},
\end{array} \right. \end{equation*}
is in $\mathcal L(X)$ and $$ \limsup_{X_1\ni[1:\zeta]\rightarrow a}(\eta(\zeta)-\rho(1,\zeta))=0, \; \limsup_{X_2\ni[1:\zeta]\rightarrow a}(\eta(\zeta)-\rho(1,\zeta))=1, $$ where $a=[0:1:0:..:0]\in \overline X\setminus X$.  Proposition \ref{1} implies that $ \eta$ does not extend in $\mathcal L(\mathbb C^n)$.  However by \cite[Theorem A]{CGZ}, we can find an extension with arbitrarily small additional growth. Explicitly we take $$\tilde \eta(z)=\rho(1,z)+\epsilon\log|1+z_n(e^{\frac{1}{\epsilon}}-1)|.$$ Then $\tilde \eta\in \mathcal L_{1+\epsilon}(\mathbb C^n)$ and $\tilde \eta|_X=\eta.$
\end{example}

The following example is a generalization of \cite[Example $3.3$]{CGZ}.

\begin{example}\label{ex4} Let $X$ be given by the equation $z_1z_n=z_1^3+1.$ It is clear that $X$ is irreducible in $\mathbb C^n$. The closure $\overline X$ of $X$ in $\mathbb P^n$ is given by $$\overline{X}=\{[t:z_1:...:z_n]\in \mathbb P^n: z_1z_nt=z_1^3+t^3 \}=X\cup \{t=z_1=0\}.$$ 

We take $a=[0:...:0:1]\in \overline X \setminus X$. First we will show that the germ $(\overline X,a)$ has two irreducible components $X_1$ and $X_2$ whose intersection lies in the hyperplane at infinity. Let $(s_0,...,s_{n-1})$ be affine coordinates near $a\in \{z_n\neq 0\}$ where $s_0=\frac{t}{z_n}$, $s_i=\frac{z_i}{z_n}.$  In these coordinates the germ $(\overline X,a)$ is defined by $s_0s_1=s_0^3+s_1^3.$ We change the coordinate $s_0$ by $u=s_0+s_1$. In the new coordinates the germ $(\overline X,a)$ is defined by the Weierstrass polynomial $f(u,s_1,...,s_{n-1})=s_1^2-s_1u+\frac{u^3}{3u+1}=0$ and $f=f_1f_2$ where $f_1$ and $f_2$ are germs of holomorphic functions in $\mathcal O_{\mathbb C^n,0}$ defined by
\begin{eqnarray*} f_1(u,s_1,...,s_{n-1})=s_1-\frac{1}{2}u\left( 1+\sqrt{1-\frac{4u}{3u+1}}  \right),\end{eqnarray*}
\begin{eqnarray*}  f_2(u,s_1,...,s_{n-1})=s_1-\frac{1}{2}u \left( 1-\sqrt{1-\frac{4u}{3u+1}} \right).
\end{eqnarray*}
Then $\{f_1=0\}\cap\{f_2=0\}=\{s_1=u=0\}$ near $a$. In the original coordinates, $\{f_1=0\}\cap \{f_2=0\}=\{s_0=s_1=0\}$. Thus the germ $(\overline X,a)$ has two irreducible components $X_1$ and $X_2$ whose intersection is contained in the hyperplane at infinity. Therefore $X_1$ and $X_2$ are not linked and by Theorem \ref{main}, $\mathcal L(X)\neq \mathcal L(\mathbb C^n)|_X.$ 

Now we will give an explicit example of a function in $\mathcal L(X)$ which has no extension in $\mathcal L(\mathbb C^n).$ The function $$\psi(z)=\max(\log|z_n-z_1^2|,2\log|z_1|+1)$$ is psh in $\mathbb C^n$. It is clear that $\eta:=\psi|_{X}\in \mathcal L(X)$ since $z_1z_n=z_1^3+1.$ On $X$, in the coordinates $(s_0,...,s_{n-1})$ near $a$, we have $s_1\neq 0$ as $z_1\neq 0$ and $s_0\neq 0$ since $X\subset \mathbb C^n=\{t\neq 0\}.$  In these coordinates near $a$, the functions $\eta$ and $\rho$ are given by
\begin{eqnarray*} \eta(s_0,...,s_{n-1})=\max \left(\log\left |\frac{s_0}{s_1}\right |, 2\log\left |\frac{s_1}{s_0}\right |+1\right ),
\end{eqnarray*}
and
\begin{eqnarray*} \rho(s_0,...,s_{n-1})= \log\left(1+\left | \frac{s_1}{s_0} \right |^2+...+\left | \frac{s_{n-1}}{s_0} \right |^2+\frac{1}{|s_0|^2} \right)^{\frac{1}{2}} =\log \frac{1}{|s_0|} +o(1),
\end{eqnarray*}
as $(s_0,...,s_{n-1})\rightarrow (0,...,0).$

On $(X_1,a)$ in the coordinates $(s_0,...,s_{n-1}),$
\begin{eqnarray*} f_1(s_0,...,s_{n-1})=-s_0-\frac{1}{2}(s_0+s_1)O(|s_0+s_1|)=0.
\end{eqnarray*}
This implies that
\begin{eqnarray*} \left |\frac{s_1}{s_0}\right |=\frac{2+O(|s_0+s_1|)}{O(|s_0+s_1|)}\rightarrow \infty,
\end{eqnarray*}
as $(s_0,s_1)\rightarrow (0,0).$
Since $s_1^2/s_0=1-s_0^2/s_1$ on $(\overline X,a)$,
\begin{eqnarray*} \limsup_{X_1 \ni[1:\zeta] \rightarrow a}(\eta(\zeta)-\rho(1,\zeta))&=& \limsup_{(s_0,s_1)\rightarrow (0,0)}\left(2\log\left| \frac{s_1}{s_0} \right | +1-\log \frac{1}{|s_0|}\right)\\ &=&\limsup _{(s_0,s_1)\rightarrow (0,0)}  \log \left |\frac{s_1^2}{s_0}  \right |+1=1.
\end{eqnarray*}
Similarly we obtain that 
\begin{eqnarray*}\limsup_{ X_2 \ni[1:\zeta]\rightarrow a}(\eta(\zeta)-\rho(1,\zeta))=\limsup_{(s_0,s_1)\rightarrow (0,0)}\left(\log\left| \frac{s_0}{s_1} \right |-\log \frac{1}{|s_0|}\right)=0.
\end{eqnarray*}

Thus by Proposition \ref{1}, $\eta$ has no extension in $\mathcal L (\mathbb C^n).$
However we know from \cite[Theorem A]{CGZ} that with an arbitrarily small additional growth $\eta$ has an extension in $\mathcal L_{\gamma} (\mathbb C^n)$ where $\gamma>1.$ We will give an explicit extension using a similar idea given in the proof of \cite[Proposition 3.3]{BL}.

Let $\Theta=\{z\in\mathbb C^n: |z_1z_n-z_1^3-1|<e^{-3}\}$. Clearly $X\subset \Theta$. In $\bar {\Theta}\subset \mathbb C^n$ when $|z_1|<2$, $\psi$ has logarithmic growth.  In $ \bar {\Theta}$ when  $|z_1|\geq2$, $\psi(z)=\log|z_1|^2+1$ and  
\begin{eqnarray}\label{1245} |z_1|^2-\frac{\delta+1}{2}<|z_n|< |z_1|^2+\frac{\delta+1}{2},
\end{eqnarray}
where $\delta=e^{-3}.$ Thus $\psi$ has logarithmic growth in $\bar \Theta.$ One can easily check that $\psi(z)\leq\log^+||z||+3$ in $\bar {\Theta}.$   Indeed, if $z\in \Theta$ and $|z_1|<2$ then 
\begin{eqnarray*} \psi(z)\leq\max(\log(|z_n|+4),\log 4+1)\leq \max(\log(5|z_n|),3)\leq \log^+||z||+3.
\end{eqnarray*}
If $z\in \Theta$ and $|z_1|\geq 2$ then inequality (\ref{1245}) implies that 
\begin{eqnarray*} \psi(z)&=&\log|z_1|^2+1\leq\log(|z_n|+1)+1\\&\leq& \max(\log 2,\log(2|z_n|))+1\leq \log^+||z||+3.
\end{eqnarray*}
Let $\phi(z)=\epsilon(\frac{1}{3}\log|z_1z_n-z_1^3-1|+1)$ where $\epsilon>0.$ Then $\phi\in \mathcal L_{\epsilon}(\mathbb C^n)$, $\phi=-\infty$ on $X$ and $\phi\geq 0$ on $\mathbb C^n \setminus \Theta. $ We now define
\begin{equation*}
\tilde \eta(z) = \left\{
\begin{array}{ll}
\max(\psi(z), \log^+||z||+\phi(z)+3) & \text{if} \;z\in \Theta,\\
\log^+||z||+\phi(z)+3 & \text{if}\; z\in\mathbb C^n\setminus \Theta.
\end{array} \right.
\end{equation*}
We have $\psi(z)\leq \log^+||z||+\phi(z)+3$  on $\partial \Theta$. So $\tilde \eta\in PSH(\mathbb C^n).$ Since $\psi\in \mathcal L(\Theta)$ and $\phi \in \mathcal L_{\epsilon}(\mathbb C^n)$, $\tilde \eta\in \mathcal L_{1+\epsilon}(\mathbb C^n)$ and $\tilde \eta|_X=\eta.$
\end{example}

\begin{example}\label{ex5} Let  $X$ be the surface in $\mathbb C^3$ given by equation $3z-3xy+x^3=0.$ Then $$\overline X=\{[t:x:y:z]\in \mathbb P^3: 3zt^2-3xyt+x^3=0 \}.$$ This surface is called Cayley's ruled cubic surface. One verifies that $\overline X\cap H=\{t=x=0 \}$ and all the points on the line $\{t=x=0\}$ are singular.    

Let  $a=[0:0:1:1]\in \overline X\setminus X.$ In the neighborhood $\{z\neq 0\}\subset \mathbb P^3$ of $a$, $\overline X$ is given by $$\{(t,x,y)\in \mathbb C^3: f(t,x,y)=t^2-xyt+\frac{x^3}{3}=0\},$$ near $(0,0,1)$. The function $f$ can be written as $f(t,x,y)=f_1(t,x,y)f_2(t,x,y)$ where $$f_1(t,x,y)=t-\frac{xy}{2}\left(1-\sqrt{1-\frac{4x}{3y^2}}\right) \;\text{and}\; f_2(t,x,y)=t-\frac{xy}{2}\left(1+\sqrt{1-\frac{4x}{3y^2}}\right)$$ are holomorphic near $(0,0,1).$  In a small neighborhood of $(0,0,1),$ $$\{f_1=0\}\cap\{f_2=0\}=\{t=x=0\}.$$ Thus the germ $(\overline X,a)$ has two irreducible components whose intersection lies along a line contained in the hyperplane at infinity. Therefore these irreducible components are not linked and by Theorem \ref{main}, $\mathcal L(X)\neq \mathcal L(\mathbb C^3)|_X.$ 

Now we will give an explicit example of a function in $\mathcal L(X)$ which does not extend to $\mathbb C^3$ with logarithmic growth. Let $a=[0:0:1:1]$ and $X_1$, $X_2$ be the two irreducible components of  the germ $(\overline X, a)$ defined by $f_1$ and $f_2$ respectively.

Let $C_1$ be the curve $\{(x,\frac{x^2+x}{3},\frac{x^2}{3}):x\in \mathbb C\}\subset X$ and $\overline C_1$ be its closure in $\mathbb P^3$. One checks that  $a\in \overline C_1$ and $(\overline C_1,a)\subset X_1.$

Let $C_2$ be the curve $\{(1,y,y-\frac{1}{3}):y\in\mathbb C\}\subset X$. One checks that $a\in \overline C_2$ and $(\overline C_2,a)\subset X_2.$ 

We now define $\psi(x,y,z)=\max(\log|z|, 2\log|x|)\in PSH(\mathbb C^3).$ First we show that $\eta:=\psi|_X\in \mathcal L(X).$ When $|z|>|x|^2$, $\eta(x,y,z)=\log|z|.$ Hence we may assume that $|z|\leq|x|^2.$ It follows that  $\eta(x,y,z)=\log|x|^2$ and $|y|>\frac{|x|^2}{3}-\frac{|z|}{|x|}>\frac{|x|^2}{3}-|x|>\frac{|x|^2}{6}$ when $|x|>6$ on $X$.
Thus $\eta\in \mathcal L(X).$ 

Note that $\overline {C_1}=C_1\cup \{a\}$ and $X_1\supset C_1\ni(x,\frac{x^2+x}{3},\frac{x^2}{3}) \rightarrow a$ as $x\rightarrow \infty.$ $\eta|_{X_i\cap\{t=1\}}$ induces a function $\tilde \eta_i$ on $X_i\subset\mathbb P^3$ defined near $a$ by  
\begin{equation*}
\tilde{\eta}_i([t:\tau]) = \left\{\begin{array}{ll}
\eta(\tau)-\rho(1,\tau) & t=1, \\
\limsup_{X_i\cap\mathbb C^3\ni[1:\zeta]\rightarrow [0:\tau]}(\eta(\zeta)-\rho(1,\zeta)) & t=0,
\end{array} \right. \end{equation*}
where $\tau=(x,y,z).$ Since $X_i$ is locally irreducible near $a$, \cite[Theorem 1.7]{D2} implies that $\tilde\eta_i$ is $\omega|_{X_i}-psh$ on $X_i$ and $\tilde \eta_i|_{\bar{C_i}}$ is $\omega|_{\bar{C_i}}-psh$ on $\bar{C_i}.$ Then
\begin{eqnarray*}
&&\limsup_{X_1\cap \mathbb C^3\ni[1:\zeta]\rightarrow a}(\eta(\zeta)-\rho(1,\zeta))=\tilde{\eta_1}(a)=\tilde{\eta_1}|_{\bar{C_1}}(a)= \limsup_{C_1\ni\tau\rightarrow a}\tilde{\eta_1}|_{\bar{C_1}}(\tau)\\
&& \ \ \ \ \ =\limsup_{x\rightarrow \infty}\left(\eta \left(x,\frac{x^2+x}{3},\frac{x^2}{3}  \right )-\rho\left(1,x,\frac{x^2+x}{3},\frac{x^2}{3} \right)\right)=\log\left(\frac{3}{\sqrt 2}\right). \end{eqnarray*}
The third equality holds by Lemma \ref{lsp2}. Similarly
\begin{eqnarray*}
&&\limsup_{X_2\cap \mathbb C^3\ni[1:\zeta]\rightarrow a}(\eta(\zeta)-\rho(1,\zeta))=\tilde{\eta_2}(a)=\tilde{\eta_2}|_{\bar{C_2}}(a)= \limsup_{C_2\ni\tau\rightarrow a}\tilde{\eta_2}|_{\bar{C_2}}(\tau)\\&&\ \ \ \ \ =\limsup_{y\rightarrow \infty}\left(\eta \left(1,y,y-\frac{1}{3}  \right )-\rho\left(1,1,y,y-\frac{1}{3}\right)\right)=\log\left(\frac{1}{\sqrt 2}\right). \end{eqnarray*}

By Proposition \ref{1}, $\eta$ does not extend in $\mathcal L(\mathbb C^3).$ 

We know from \cite[Theorem A]{CGZ} that with an arbitrarily small additional growth $\eta$ extends in $\mathcal L_{\gamma}(\mathbb C^3)$ where $\gamma>1$. We will give an explicit extension using a similar idea given in the proof of \cite[Proposition 3.3]{BL}.

Let $\Theta=\{(x,y,z)\in \mathbb C^3: |3z-3xy+x^3|<e^{-3} \}.$ Clearly  $X$ is contained in $\Theta$. First we show that $\psi\in \mathcal L(\Theta).$ When $|x|<6$, $\psi(x,y,z)\leq \log|z|+4.$ When $|z|>|x|^2$, $\psi(x,y,z)=\log|z|.$ Hence we can assume that $|z|\leq|x|^2$ and $|x|\geq 6.$ It follows that $|y-\frac{z}{x}-\frac{x^2}{3}|<\frac{e^{-3}}{18}.$ Therefore $|y|>\frac{|x|^2}{12}$ and $$\psi(x,y,z)=\log|x|^2\leq \log|y|+\log12. $$ Thus in $\Theta$, $\psi(x,y,z)\leq \log||(x,y,z)||+4.$ That is, $\psi\in \mathcal L(\Theta).$ Let $$\phi(x,y,z)=\epsilon\left(\frac{1}{3}\log|3z-3xy+x^3|+1 \right),$$ where $\epsilon>0.$ Then $\phi \in \mathcal L_{\epsilon}(\mathbb C^3)$, $\phi=-\infty$ on $X$ and $\phi\geq 0$ on $\mathbb C^3\setminus \Theta.$ We now define
\begin{equation*}
\tilde \eta(\tau) = \left\{
\begin{array}{ll}
\max(\psi(\tau), \log||\tau||+\phi(\tau)+4) & \text{if}\; \tau=(x,y,z)\in \Theta,\\
\log||\tau||+\phi(\tau)+4 & \text{if}\; \tau\in \mathbb C^3\setminus \Theta.
\end{array} \right.
\end{equation*}
Since we have $\psi(\tau)\leq \log||\tau||+\phi(\tau)+4$ on $\partial \Theta$, $\tilde \eta\in PSH(\mathbb C^3).$ As $\psi\in \mathcal L (\Theta)$ and $\phi\in \mathcal L_{\epsilon}(\mathbb C^3),$ $\tilde \eta \in \mathcal L_{1+\epsilon}(\mathbb C^3)$ and $\tilde \eta|_X=\eta.$
\end{example}

 \end{document}